\theoremstyle{plain}
\newtheorem{theorem}{\bf Theorem}[section]
\newtheorem{lemma}[theorem]{\bf Lemma}
\newtheorem{proposition}[theorem]{\bf Proposition}
\newtheorem{corollary}[theorem]{\bf Corollary}
\newtheorem{conjecture}[theorem]{\bf Conjecture}
\theoremstyle{definition}
\newtheorem{definition}[theorem]{\bf Definition}
\newtheorem{nonexample}[theorem]{\bf Non-example}
\newtheorem{remark}[theorem]{\bf Remark}
\newcommand{\R}{\mathbb{R}}
\newcommand{\N}{\mathbb{N}}
\newcommand{\disp}{\displaystyle}
\newcommand{\nai}[2]{\langle #1,#2\rangle}
\newcommand{\eqa}[1]{
\begin{align*}
#1
\end{align*}}
\title{Notes on a conjecture by Paszkiewicz on an ordered product of positive contractions}
\date{\today}
\author[H.~Ando]{Hiroshi Ando}
\address{Hiroshi~Ando, Department of Mathematics and Informatics, Chiba University, 1-33 Yayoi-cho, Inage, Chiba, 263- 8522,
Japan}
\email{hiroando@math.s.chiba-u.ac.jp}
\let\origmaketitle\maketitle
\def\maketitle{
  \begingroup
  \def\uppercasenonmath##1{} 
  \let\MakeUppercase\relax 
  \origmaketitle
  \endgroup
}
\begin{document}
\maketitle
\begin{abstract}
    Paszkiewicz's conjecture asserts that given a decreasing sequence $T_1\ge T_2\ge \dots$ of positive contractions on a separable infinite-dimensional Hilbert space $H$, the product $S_n=T_nT_{n-1}\cdots T_1$ converges in the strong operator topology. 
    In these notes, we give an equivalent, more precise formulation of his conjecture. Moreover, we show that the conjecture is true for the following two cases: (1) $1$ is not in the essential spectrum of $T_n$ for some $n\in \N$. (2) The von Neumann algebra generated by $\{T_n\mid n\in \N\}$ admits a faithful normal tracial state. We also remark that the analogous conjecture for the weak convergence is true. 
\end{abstract}

\noindent

\medskip

\noindent

\medskip
\section{Introduction and Statement of the result}
In the problem session of 2018 workshop ``Noncommutative Harmonic Analysis'', at B\c{e}dlewo, Paszkiewicz announced the following conjecture about a product of positive contractions. 

\begin{conjecture}[Adam Paszkiewicz, 2018]\label{conj Paszkiewicz} 
Let $H$ be a separable infinite-dimensional Hilbert space, $T_1\ge T_2\ge \dots $ be a sequence of positive linear contractions on $H$. 
Then the sequence $S_n:=T_nT_{n-1}\cdots T_1$ converges strongly. 
\end{conjecture}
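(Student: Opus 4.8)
The plan is to reduce the conjecture to a single ``no loss of mass'' assertion and then attack that assertion with the monotonicity $T_{n+1}\le T_n$. First I would observe that the numerical sequence $\|S_nx\|$ is automatically non-increasing for every sequence of positive contractions: putting $A_n:=S_n^*S_n$, the recursion $A_n=S_{n-1}^*T_n^2S_{n-1}\le S_{n-1}^*S_{n-1}=A_{n-1}$ holds because $0\le T_n\le 1$ forces $T_n^2\le 1$. Hence $A_n$ decreases strongly to some $A_\infty\ge 0$, and $\ell(x):=\lim_n\|S_nx\|$ exists with $\ell(x)^2=\langle A_\infty x,x\rangle$. Since the weak form of the conjecture is available, $S_n\to S_\infty$ in the weak operator topology for some contraction $S_\infty$. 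In a Hilbert space, weak convergence together with convergence of norms forces strong convergence, so $S_nx\to S_\infty x$ for all $x$ if and only if $\ell(x)=\|S_\infty x\|$ for all $x$, that is, if and only if
\[
A_\infty=S_\infty^*S_\infty.
\]
Weak lower semicontinuity of the norm gives $\langle A_\infty x,x\rangle=\ell(x)^2\ge\|S_\infty x\|^2$ for free, so the conjecture is equivalent to the reverse inequality, i.e. to the vanishing of the positive defect $D:=A_\infty-S_\infty^*S_\infty\ge0$.

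Next I would extract from the monotonicity the structural fact that locates $S_\infty$. Let $T_\infty$ be the strong limit of the decreasing sequence $T_n$. The relation $S_n=T_nS_{n-1}$ passes to the limit: for $x,z\in H$ we have $\langle S_nx,z\rangle=\langle S_{n-1}x,T_nz\rangle\to\langle S_\infty x,T_\infty z\rangle$, since $T_nz\to T_\infty z$ in norm while $S_{n-1}x\rightharpoonup S_\infty x$. Comparing with $\langle S_nx,z\rangle\to\langle S_\infty x,z\rangle$ yields $S_\infty=T_\infty S_\infty$, so $\mathrm{Ran}\,S_\infty\subseteq\ker(I-T_\infty)$: the weak limit is supported on the eigenspace of $T_\infty$ at $1$. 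Decomposing $H$ along the spectral projections of $T_\infty$, on the band where $\sigma(T_\infty)\subseteq[0,1-\epsilon]$ the operator $T_\infty$ is a strict contraction, and I would try to show, via $T_n\to T_\infty$ and the telescoping identity $\sum_n\langle S_{n-1}x,(I-T_n^2)S_{n-1}x\rangle=\|x\|^2-\ell(x)^2<\infty$, that no surviving mass sits there. The limiting mass $\ell(x)^2$ would then be concentrated, for every $\epsilon>0$, on $\mathrm{Ran}\,\mathbf{1}_{(1-\epsilon,1]}(T_\infty)$, i.e. on spectrum arbitrarily close to $1$.

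This yields the decisive dichotomy. If $1$ is an eigenvalue of $T_\infty$, the surviving mass is captured by $\ker(I-T_\infty)=\mathrm{Ran}\,S_\infty$ and one expects $\ell(x)=\|S_\infty x\|$, hence $D=0$; if instead $1$ lies only in the continuous spectrum, then $\ker(I-T_\infty)=0$, so $S_\infty=0$ and the conjecture demands full dissipation $\|S_nx\|\to0$. To treat both uniformly I would exploit self-similarity of the tails: writing $P_{m,n}:=T_m\cdots T_{n+1}$ and letting $Q_n$ be the weak limit of $P_{m,n}$ as $m\to\infty$ (which exists by the weak result applied to the shifted sequence $T_{n+1}\ge T_{n+2}\ge\cdots$), one gets $S_\infty=Q_nS_n$ and $\|S_\infty x\|\le\|S_nx\|$. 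Thus $D=0$ is precisely the statement that $\|S_nx\|$ and $\|Q_nS_nx\|$ share the same limit, i.e. that the tail maps $Q_n$ act asymptotically isometrically on the vectors $S_nx$, and I would try to force this by feeding the square-summability of the one-step defects into a quantitative bound that forbids mass from drifting through spectral values of $T_\infty$ approaching $1$ without being trapped by an eigenvector.

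The main obstacle is exactly this purely continuous-spectrum regime. When $T_\infty$ has spectrum accumulating at $1$ but no eigenvector there, the mass of $S_{n-1}x$ can a priori migrate upward through spectral values tending to $1$, keeping $\|S_nx\|$ bounded away from $0$ while $S_nx\rightharpoonup0$; ruling this out requires comparing the rate at which $T_n\to T_\infty$ against the rate at which the spectral mass of $S_{n-1}x$ concentrates near the top of the spectrum, and I expect this quantitative balance to be the heart of the conjecture. It is precisely this difficulty that the two special cases avoid: hypothesis~(1), that $1\notin\sigma_{\mathrm{ess}}(T_N)$ for some $N$, excludes continuous spectrum at $1$ and confines any surviving mass to a finite-dimensional eigenspace on which convergence is elementary, while hypothesis~(2), the existence of a faithful normal tracial state $\tau$, upgrades the weak convergence to $\|\cdot\|_2$-control, which I would use to force $\tau(D)=0$ and hence $D=0$ by faithfulness. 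Handling the general continuous-spectrum case without either of these aids is the step I expect to be genuinely hard.
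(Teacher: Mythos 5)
Your proposal is not a proof, and to your credit you say so: the ``quantitative balance'' you defer to at the end is precisely the open content of the statement. Note first that the statement is Paszkiewicz's \emph{conjecture}, which the paper itself does not prove in general either; the paper only (a) shows it is equivalent to the sharper assertion $\lim_{n\to\infty}S_n=P:=1_{\{1\}}(T)$ $*$-strongly, where $T=\lim_{n\to\infty}T_n$ (SOT), and (b) verifies it in the two special cases you name. Your reduction reproduces this framework quite faithfully: your identity $S_\infty=T_\infty S_\infty$ and the defect $D=A_\infty-S_\infty^*S_\infty$ encode the same information as Proposition~\ref{prop: S_n*converges}, which proves $S_n^*\to P$ (SOT) --- hence $S_n\to P$ (WOT), so in your notation $S_\infty=P$ and all your tail limits $Q_n$ equal $P$ --- proves strong convergence whenever $\{S_n\xi\}$ is totally bounded, and proves $\|S_{n+k}\xi-S_n\xi\|\to 0$ for each fixed $k$ (your telescoping estimate is the same computation). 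Up to the point where you announce the ``decisive dichotomy,'' your argument is sound and parallels the paper.

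The genuine gap is the step you flag: excluding upward spectral drift when $\sigma(T)$ accumulates at $1$. The paper contains concrete evidence that this cannot be closed by the soft facts you have assembled. Its Non-example constructs unit vectors $(\xi_n)$ that are weakly null, have non-increasing norms, and satisfy $\|\xi_{n+k}-\xi_n\|\to 0$ for every $k$, yet contain an orthonormal subsequence --- so they are not totally bounded and certainly do not converge in norm. These three properties are exactly the output of your scheme (weak convergence from the WOT result, monotone norms from $A_n\le A_{n-1}$, vanishing increments from the square-summability $\sum_n\langle S_{n-1}x,(1-T_n^2)S_{n-1}x\rangle<\infty$), so ``feeding the one-step defects into a quantitative bound'' cannot succeed without some further input that genuinely uses the monotone structure $T_{n+1}\le T_n$ of the product; no such input appears in your plan. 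Two smaller corrections: your dichotomy is not clean, since $1$ can simultaneously be an eigenvalue of $T$ and an accumulation point of $\sigma(T)$ --- the real task is always $\|S_nP^{\perp}x\|\to 0$, with or without eigenvectors at $1$; and case (2-i) is not ``elementary'' convergence on a finite-dimensional eigenspace: the paper must run a rank-decreasing induction (Lemma~\ref{lem: gap and rank} inside Proposition~\ref{prop: spectral gap}) to extract a \emph{uniform} spectral gap at $1$ along the whole sequence before the geometric decay $(1-\delta)^j$ applies. Your sketch for the tracial case (2-ii) is essentially the paper's argument ($S_n^*\to P$ strongly plus SOT-continuity of the adjoint on the unit ball of a finite von Neumann algebra, equivalently $\|\cdot\|_2$-control via the trace), so that part would go through.
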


In fact, it is possible to make a guess about what the limit of $S_n$ should be, if it exists. 
Note that since $T_1\ge T_2\ge $ is a non-increasing sequence of positive operators, the limit $T:=\lim_{n\to \infty}T_n$ (SOT) exists (SOT stands for the strong operator topology). 
We will use the notation that for a Borel subset $A$ of $\R$, $1_A(T)$ denotes the spectral projection of $T$ corresponding to $A$. Let $P:=1_{\{1\}}(T)$. Now consider the following

\begin{conjecture}\label{conj strong Paszkiewicz} Let $T_1\ge T_2\ge \dots $ be as in Conjecture \ref{conj Paszkiewicz}. Then
$\disp \lim_{n\to \infty}S_n=P$ ($*$-strongly). 
\end{conjecture}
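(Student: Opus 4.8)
The plan is to separate the behaviour of $S_n$ on the eigenspace $\mathrm{ran}(P)$ of $T$ at the eigenvalue $1$ from its behaviour on $\mathrm{ran}(1-P)$. First I would record the elementary fact that $P$ reduces every $T_n$ and that each $T_n$ acts as the identity on $\mathrm{ran}(P)$: if $P\xi=\xi$ then $\langle T\xi,\xi\rangle=\|\xi\|^2$, and since $T\le T_n\le 1$ this forces $\langle T_n\xi,\xi\rangle=\|\xi\|^2$, whence $(1-T_n)^{1/2}\xi=0$ and $T_n\xi=\xi$; self-adjointness of $T_n$ then makes $\mathrm{ran}(P)$ reducing. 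Consequently $S_nP=PS_n=P$ for every $n$, so $S_n-P=S_n(1-P)$, and the conjecture reduces to proving $S_n\eta\to0$ and $S_n^*\eta\to0$ for every $\eta\in\mathrm{ran}(1-P)$.

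Next I would identify the limit, granting the weak convergence announced in the abstract. Because $\|T_{n+1}v\|\le\|v\|$, the scalars $\|S_n\xi\|$ are non-increasing in $n$, hence convergent; this stabilises the norms but not the vectors. Suppose $S_n\to S$ weakly. Passing to the limit in the recursion $S_n=T_nS_{n-1}$, using $T_n\to T$ strongly together with $S_{n-1}\to S$ weakly, yields $S=TS$, so $\mathrm{ran}(S)\subseteq\ker(1-T)=\mathrm{ran}(P)$, i.e. $PS=S$; meanwhile $S_nP=P$ gives $SP=P$ in the limit, and $\|S\|\le 1$ as a weak limit of contractions. Writing $S$ as a $2\times2$ operator matrix relative to $H=\mathrm{ran}(P)\oplus\mathrm{ran}(1-P)$, these three facts force $S=\left(\begin{smallmatrix}I&B\\0&0\end{smallmatrix}\right)$ with $S^*S\le I$; positivity of the block matrix $I-S^*S$ with vanishing $(1,1)$-corner forces $B=0$, so $S=P$. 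This pins down the limit and proves the weak version of the conjecture.

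To upgrade weak to ($*$-)strong convergence it then suffices to prove $\|S_n\xi\|\to\|P\xi\|$ and $\|S_n^*\xi\|\to\|P\xi\|$ for every $\xi$, since weak convergence together with convergence of norms gives norm convergence. For $\xi\in\mathrm{ran}(P)$ this is immediate, so the whole problem collapses to the single estimate
\[
  \lim_{n\to\infty}\|T_nT_{n-1}\cdots T_1\,\eta\|=0\qquad(\eta\perp\mathrm{ran}(P)),
\]
together with its reversed-order analogue for $S_n^*=T_1\cdots T_n$, handled by the same methods. Here the telescoping identity $\|S_{n-1}\xi\|^2-\|S_n\xi\|^2=\langle(1-T_n^2)S_{n-1}\xi,S_{n-1}\xi\rangle$ sums to $\|\xi\|^2-\lim_n\|S_n\xi\|^2<\infty$, so the summands tend to $0$; since $1-T_k\le 1-T_m^2$ for $m\ge k$, one deduces for each fixed $k$ that $\langle(1-T_k)S_n\xi,S_n\xi\rangle\to0$, i.e. $S_n\xi$ is asymptotically fixed by each individual $T_k$.

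I expect the main obstacle to be exactly the passage from ``$S_n\xi$ is asymptotically fixed by each $T_k$ separately'' to ``$S_n\xi$ is asymptotically in $\bigcap_k\ker(1-T_k)=\mathrm{ran}(P)$'': the rate of $\langle(1-T_k)S_n\xi,S_n\xi\rangle\to0$ degenerates as $k\to\infty$, and with no uniform spectral gap near $1$ nothing prevents the ``mass'' of $S_n\xi$ from hiding near the top of the spectra of the moving operators $T_n$ while drifting slowly off $\mathrm{ran}(P)$. This is precisely where the two hypotheses of the paper enter: when $1\notin\sigma_{\mathrm{ess}}(T_N)$ the offending part is finite-dimensional and one recovers a genuine gap giving geometric decay off a finite-rank space; when a faithful normal tracial state $\tau$ is available one can rerun the escape-of-mass argument inside the Hilbert space $L^2(\tau)$, where faithfulness converts ``asymptotically fixed by every $T_k$'' into ``supported on $\mathrm{ran}(P)$''. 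I would therefore establish those two cases first and regard the general gap-free, trace-free situation as the genuine open core.
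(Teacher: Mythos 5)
You should first be clear that the statement in question is a \emph{conjecture}: the paper itself does not prove it in general, but only (a) shows it is equivalent to Conjecture \ref{conj Paszkiewicz}, (b) proves the adjoint and weak versions unconditionally, and (c) settles the two special cases of Theorem \ref{thm main}~(2). Your proposal honestly arrives at the same place --- you prove the reduction to $S_nP^{\perp}\xi\to 0$, identify the candidate limit, and explicitly flag the gap-free, trace-free situation as the open core --- so as an analysis it is essentially sound and close in spirit to the paper. Your individual steps check out: $T_nP=P$ and the reduction $S_n-P=S_nP^{\perp}$ match Lemma \ref{lem contraction eigenvalue}; the block-matrix identification of the weak limit is correct, though circular as written, since you ``grant'' the weak convergence you are trying to establish (it could be completed by a weak-compactness argument showing every WOT cluster point equals $P$); and the telescoping estimate $\sum_n\langle(1-T_n^2)S_{n-1}\xi,S_{n-1}\xi\rangle\le\|\xi\|^2$ together with $T_m^2\le T_m\le T_k$ for $m\ge k$ does yield $\|(1-T_k)^{1/2}S_n\xi\|\to 0$ for each fixed $k$ --- a correct observation, recorded in the paper only in the weaker form $\|S_{n+k}\xi-S_n\xi\|\to 0$ (Proposition \ref{prop: S_n*converges}~(3)).

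The one substantive point you miss is the asymmetry between $S_n$ and $S_n^*$. You write that the reversed-order estimate for $S_n^*=T_1\cdots T_n$ is ``handled by the same methods,'' but in fact $\lim_n S_n^*=P$ (SOT) is a \emph{theorem}, not a conjecture: since $S_{n+k}^*P^{\perp}\xi=T_1\cdots T_n\bigl(T_{n+1}\cdots T_{n+k}P^{\perp}\xi\bigr)$, the newly appended factors sit on the \emph{right}, and $T_{n+1}\cdots T_{n+k}\to T^k$ (SOT) annihilates $P^{\perp}\xi$ as $k\to\infty$; this is Proposition \ref{prop: S_n*converges}~(1). For $S_n$ the new factor $T_{n+1}$ is applied \emph{last}, to a vector $S_n\xi$ over which one has no control, which is exactly why that direction remains open. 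Recognizing this asymmetry would have given you the WOT convergence non-circularly (the adjoint is WOT-continuous on bounded sets) and would have reduced the finite-von-Neumann-algebra case to one line (the $*$-operation is SOT-continuous on the unit ball of a finite von Neumann algebra), rather than rerunning an escape-of-mass argument in $L^2(\tau)$. Your diagnosis of where the two hypotheses of Theorem \ref{thm main}~(2) enter is otherwise accurate.
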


The purpose of these notes is to show that the above two conjectures are actually equivalent. Moreover, we verify the conjecture for some class of operators:
\begin{theorem}\label{thm main}
    The following statements hold. 
    \begin{list}{}{}
        \item[{\rm (1)}]     Conjecture \ref{conj Paszkiewicz} and Conjecture \ref{conj strong Paszkiewicz} are equivalent.
        \item[{\rm (2)}]  Conjecture \ref{conj Paszkiewicz} is true if the operators $T_1,T_2,\dots$ satisfy one of the following two conditions. 
        \begin{list}{}{}
            \item[{\rm (2-i)}] $1$ is not in the essential spectrum of $T_n$ for some $n\in \N$.
            \item[{\rm (2-ii)}] $\{T_1,T_2,\dots\}$ generates a finite von Neumann algebra. 
        \end{list} 
    \end{list} 
\end{theorem}
In Proposition \ref{prop: S_n*converges}~(1), which will be used to prove the equivalence of the above conjectures, we show that $\disp \lim_{n\to \infty}S_n^*=P$ (SOT) holds. Thus, (2-ii) is an immediate consequence of this convergence. Note also that because the $*$-operation is continuous on norm-bounded sets in the WOT (weak operator topology), the WOT analogue of Paszkiewicz's conjecture is true:
\begin{proposition}
Let $T_1\ge T_2\ge \dots$ be as in Conjecture \ref{conj Paszkiewicz}. 
Then $\disp \lim_{n\to \infty}S_n=P$ weakly. 
\end{proposition}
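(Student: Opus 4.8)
The plan is to obtain this as an immediate consequence of Proposition~\ref{prop: S_n*converges}~(1), which already supplies the SOT convergence $S_n^*\to P$; the only remaining point is to transport that information across the adjoint. First I would observe that SOT convergence is a fortiori WOT convergence, so $S_n^*\to P$ weakly. Next, since every $T_i$ is a positive contraction we have $\|S_n\|=\|T_nT_{n-1}\cdots T_1\|\le 1$, so the sequence $(S_n^*)_n$ lies in the closed unit ball of $B(H)$, a norm-bounded set.

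The key step is then to apply the WOT-continuity of the involution on norm-bounded sets. Unlike the SOT, in which the adjoint map is notoriously discontinuous, in the WOT one has for all $\xi,\eta\in H$ the identity $\nai{A^*\xi}{\eta}=\overline{\nai{A\eta}{\xi}}$, so convergence of all matrix coefficients of $A_n$ forces convergence of all matrix coefficients of $A_n^*$. Applying this to $A_n=S_n^*$ and using that $P=1_{\{1\}}(T)$ is an orthogonal, hence self-adjoint, projection, I would conclude
\[
S_n=(S_n^*)^*\longrightarrow P^*=P\quad\text{weakly.}
\]

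There is no genuine obstacle here beyond Proposition~\ref{prop: S_n*converges}~(1): all of the difficulty of the weak statement is already absorbed into establishing the SOT convergence of the adjoints $S_n^*$. In the write-up I would stress that the reason this argument delivers only weak—and not strong—convergence of $S_n$ itself is precisely the failure of the adjoint to be SOT-continuous; bridging exactly that gap is the content of the full Paszkiewicz conjecture (Conjecture~\ref{conj strong Paszkiewicz}) as opposed to its weak analogue proved here.
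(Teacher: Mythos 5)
Your proposal is correct and follows exactly the paper's route: Proposition~\ref{prop: S_n*converges}~(1) gives $S_n^*\to P$ in the SOT (hence WOT), and the WOT-continuity of the adjoint on norm-bounded sets, together with $P^*=P$, yields $S_n\to P$ weakly. This is precisely the argument the paper indicates, so no further comment is needed.
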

However, on an infinite-dimensional Hilbert space, the $*$-operation is highly discontinuous on norm-bounded sets in the SOT. In this context, we remark that by the classical Amemiya--Ando's theorem, random products of projections always converge in the WOT (see \cite{MR0187116AmemiyaAndo} for details and more general results), while the SOT-convergence of random products of projections was shown not to hold in general by Paszkiewicz \cite{Paszkiewiczpreprint2012} for random products of 5 projections, and later by Kopeck\'{a} and M\"{u}ller \cite{MR3268722KopeckaMuller14} for random products of 3 projections (see also \cite{MR3642022KopeckaPaszkiewicz2017}). Thus, the difference between the SOT and the WOT is significant. 
\section{Proof of Theorem~\ref{thm main}}
For general facts about self-adjoint operators and spectral theory, we refer the reader to \cite{MR2953553Schmudgenbook}. In the sequel, we fix a separable infinite-dimensional Hilbert space $H$. The set of all bounded linear operators on the Hilbert space $H$ is denoted by $\mathbb{B}(H)$. 
\subsection{Proof of Theorem \ref{thm main}~(1) and (2-ii)}
First, we show Theorem \ref{thm main} (1) and (2-ii). 
The following elementary lemma will be useful. 
\begin{lemma}\label{lem contraction eigenvalue}Let $T\in \mathbb{B}(H)$ be a positive contraction. 
    Then for $\xi\in H$, the following three conditions are equivalent. 
    \begin{list}{}{}
        \item[{\rm (1)}] $T\xi=\xi$.
        \item[{\rm (2)}] $\|T\xi\|=\|\xi\|$.
        \item[{\rm (3)}] $\nai{T\xi}{\xi}=\|\xi\|^2$.   
    \end{list}
    In particular, if $T,T'\in \mathbb{B}(H)_+$ satisfies $0\le T'\le T\le 1$, 
    then $1_{\{1\}}(T')\le 1_{\{1\}}(T)$ holds. 
\end{lemma}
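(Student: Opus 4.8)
The plan is to establish the cycle of implications $(1)\Rightarrow(2)\Rightarrow(3)\Rightarrow(1)$ using only elementary operator inequalities valid for a positive contraction, and then to deduce the monotonicity of the spectral projections by identifying the range of $1_{\{1\}}(T)$ with the fixed-point subspace of $T$.

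The implication $(1)\Rightarrow(2)$ is immediate. For $(2)\Rightarrow(3)$ I would exploit the operator inequality $T^2\le T$, which holds because $0\le T\le 1$ gives $T-T^2=T^{1/2}(1-T)T^{1/2}\ge 0$. This yields $\|T\xi\|^2=\nai{T^2\xi}{\xi}\le\nai{T\xi}{\xi}$, while the Cauchy--Schwarz inequality gives $\nai{T\xi}{\xi}\le\|T\xi\|\,\|\xi\|$. Inserting the hypothesis $\|T\xi\|=\|\xi\|$ squeezes both estimates together and forces $\nai{T\xi}{\xi}=\|\xi\|^2$. For $(3)\Rightarrow(1)$ I would compute $\|(1-T)^{1/2}\xi\|^2=\nai{(1-T)\xi}{\xi}=\|\xi\|^2-\nai{T\xi}{\xi}=0$, so that $(1-T)^{1/2}\xi=0$ and hence $(1-T)\xi=(1-T)^{1/2}(1-T)^{1/2}\xi=0$, i.e. $T\xi=\xi$.

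For the final assertion I would first recall, via the spectral theorem, that the range of $1_{\{1\}}(T)$ coincides with the eigenspace $\ker(1-T)=\{\xi\in H\mid T\xi=\xi\}$, and likewise for $T'$; thus the claimed inequality $1_{\{1\}}(T')\le 1_{\{1\}}(T)$ is equivalent to the inclusion of these fixed-point subspaces. Given $\xi$ with $T'\xi=\xi$, the equivalence just proved (applied to $T'$) gives $\nai{T'\xi}{\xi}=\|\xi\|^2$; the order relation $T'\le T$ then yields $\|\xi\|^2=\nai{T'\xi}{\xi}\le\nai{T\xi}{\xi}$, while $T\le 1$ gives the reverse inequality $\nai{T\xi}{\xi}\le\|\xi\|^2$. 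Hence $\nai{T\xi}{\xi}=\|\xi\|^2$, and the implication $(3)\Rightarrow(1)$ for $T$ forces $T\xi=\xi$, establishing the inclusion.

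I do not expect a genuine obstacle here, as everything reduces to the two standard facts $T^2\le T$ and the identification of the range of $1_{\{1\}}(T)$ with $\ker(1-T)$. The only point demanding a little care is precisely this identification, which is where the spectral theorem is actually invoked; once it is in hand, the monotonicity of the projections is nothing more than the monotonicity of the quadratic form $\xi\mapsto\nai{T\xi}{\xi}$ combined with the Cauchy--Schwarz squeeze above.
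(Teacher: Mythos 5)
Your proof is correct, but it routes the equivalences differently from the paper. The paper proves $(2)\Rightarrow(1)$ directly by a spectral-calculus argument: setting $p_{\varepsilon}=1_{[1-\varepsilon,1]}(T)$, it shows $\|T\xi\|^2\le \|p_{\varepsilon}\xi\|^2+(1-\varepsilon)^2\|p_{\varepsilon}^{\perp}\xi\|^2$, forces $p_{\varepsilon}^{\perp}\xi=0$, and lets $\varepsilon\to 0$; it then gets $(3)\Rightarrow(1)$ by applying $(2)\Rightarrow(1)$ to $T^{1/2}$. You instead close the cycle $(1)\Rightarrow(2)\Rightarrow(3)\Rightarrow(1)$ with two one-line operator inequalities: $T^2\le T$ together with Cauchy--Schwarz squeezes $(2)$ into $(3)$, and $\|(1-T)^{1/2}\xi\|^2=\nai{(1-T)\xi}{\xi}=0$ gives $(3)\Rightarrow(1)$ without any spectral projections at all. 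Your version is arguably more elementary and self-contained, needing the spectral theorem only for the positive square root and for the identification $\operatorname{ran}1_{\{1\}}(T)=\ker(1-T)$ at the very end (a fact the paper also uses implicitly when passing from fixed vectors to the inequality of projections). What the paper's $p_{\varepsilon}$-argument buys in exchange is slightly more information along the way -- it localizes $\xi$ in the spectral subspaces $1_{[1-\varepsilon,1]}(T)(H)$ -- and this style of reasoning is reused later in the paper (e.g.\ in Proposition \ref{prop: spectral gap}), but for the lemma itself both arguments are complete. Your treatment of the final assertion coincides with the paper's.
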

\begin{proof}
    For the first part, (1)$\implies$(2) and (1)$\implies$(3) are clear. We show (2)$\implies$(1). Assume (2).
    For $0<\varepsilon<1$, let $p_{\varepsilon}=1_{[1-\varepsilon,1]}(T)$. Then 
    \eqa{
        \|\xi\|^2=\|T\xi\|^2&=\|Tp_{\varepsilon}\xi\|^2+\|Tp_{\varepsilon}^{\perp}\xi\|^2\\
        &\le \|p_{\varepsilon}\xi\|^2+(1-\varepsilon)^2\|p_{\varepsilon}^{\perp}\xi\|^2
    }
    This implies, because $\|\xi\|^2=\|p_{\varepsilon}\xi\|^2+\|p_{\varepsilon}^{\perp}\xi\|^2$, that $p_{\varepsilon}^{\perp}\xi=0$. Thus $\xi=p_{\varepsilon}\xi$. Since $\disp \lim_{\varepsilon\to +0}p_{\varepsilon}=1_{\{1\}}(T)$ strongly, we obtain $\xi=1_{\{1\}}(T)\xi$, whence $T\xi=\xi$ holds.\\
    Finally, we show (3)$\implies$(1). By (3), we have $\|T^{\frac{1}{2}}\xi\|=\|\xi\|$, which by (2)$\implies$(1) applied to $T^{\frac{1}{2}}$ implies that $T^{\frac{1}{2}}\xi=\xi$. Thus $T\xi=\xi$ holds.   

    For the last part, if $\xi\in H$ satisfies $T'\xi=\xi$, then by $T'\le T$, $\|\xi\|^2=\nai{T'\xi}{\xi}\le \nai{T\xi}{\xi}\le \|\xi\|^2$. Therefore, $\nai{T\xi}{\xi}=\|\xi\|^2$ holds. By (3)$\implies$(1), we obtain $T\xi=\xi$. This shows that $1_{\{1\}}(T')\le 1_{\{1\}}(T)$.  
\end{proof}
\begin{corollary}\label{cor Pn conv to P}
    Let $T_1\ge T_2\ge \dots$ be a decreasing sequence of positive contractions on a Hilbert space $H$. Then $P_n=1_{\{1\}}(T_n)$ converges to $P=1_{\{1\}}(T)$ in SOT, where $\disp T=\lim_{n\to \infty}T_n$ (SOT). 
\end{corollary}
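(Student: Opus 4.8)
The plan is to deduce everything from the monotonicity statement in the last part of Lemma~\ref{lem contraction eigenvalue}. First I would record two order relations among the operators. Since the sequence is decreasing, $T_n\ge T_{n+1}$ for every $n$; and since $T=\lim_{m\to\infty}T_m$ in SOT with the sequence decreasing, passing to the limit in $\nai{T_n\xi}{\xi}\ge \nai{T_m\xi}{\xi}$ for $m>n$ shows $T_n\ge T$ for every $n$. Applying the implication ``$0\le T'\le T\le 1\Rightarrow 1_{\{1\}}(T')\le 1_{\{1\}}(T)$'' to these two relations yields the decreasing chain of projections $P_1\ge P_2\ge \cdots\ge P$.

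Next I would invoke the standard fact that a decreasing sequence of orthogonal projections converges in SOT to the orthogonal projection $Q$ onto $\bigcap_n\operatorname{ran}(P_n)$. Because $P_n\ge P$ for all $n$, the limit satisfies $Q\ge P$, so it remains only to prove $Q\le P$.

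For the reverse inequality I would argue at the level of ranges. Let $\xi\in\operatorname{ran}(Q)$. Then $\xi\in\operatorname{ran}(P_n)$ for every $n$, which by the equivalence (1) in Lemma~\ref{lem contraction eigenvalue} (or simply by definition of the spectral projection at the eigenvalue $1$) means $T_n\xi=\xi$ for all $n$. Letting $n\to\infty$ and using $T_n\to T$ strongly gives $T\xi=\lim_n T_n\xi=\xi$, so $\xi\in\operatorname{ran}(P)$. Hence $\operatorname{ran}(Q)\subseteq\operatorname{ran}(P)$, i.e.\ $Q\le P$, and combined with the previous paragraph $Q=P$, which is the assertion.

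I do not expect a serious obstacle here: the only two points requiring any care are the elementary verification that the SOT limit $T$ of a decreasing sequence is actually a lower bound ($T_n\ge T$), and the invocation of the convergence of a monotone sequence of projections to its infimum. Both are routine once the monotonicity $P_1\ge P_2\ge\cdots$ supplied by the lemma is in hand. The real content of the corollary is precisely that the spectral data at the point $1$ passes to the strong limit, and the last part of Lemma~\ref{lem contraction eigenvalue} is exactly what makes this transfer work in both directions.
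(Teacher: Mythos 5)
Your proof is correct and follows essentially the same route as the paper's: both use the last part of Lemma~\ref{lem contraction eigenvalue} to get the decreasing chain $P_1\ge P_2\ge\cdots\ge P$, take the SOT limit of the monotone sequence of projections, and prove the reverse inequality by noting that a vector fixed by every $T_n$ is fixed by $T$. No gaps.
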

\begin{proof}
    By Lemma~\ref{lem contraction eigenvalue}, we know that $P_1\ge P_2\ge \dots$ is a non-increasing sequence of projections. Therefore the SOT-limit $P'=\lim_{n\to \infty}P_n$ exists and $P'$ is also a projection. Since $T_n\ge T$ for every $n\in \N$, we have $P_n\ge P$ by Lemma~\ref{lem contraction eigenvalue}, whence $P'\ge P$ holds. If $\xi\in P'(H)$, then for every $n\in \N$, $P_n\ge P'$, whence $P_n\xi=\xi$. Therefore $T_n\xi=\xi$. Letting $n\to \infty$, we obtain $T\xi=\xi$, which shows that $\xi\in P(H)$. Therefore $P'\le P$, and $P'=P$ holds. 
\end{proof}
Proof of Theorem~\ref{thm main}~(1) follows from parts (1) and (2) of the next proposition (part (3) is not needed, but we include the observation which might be useful for further study).  
\begin{proposition}\label{prop: S_n*converges}Let $T_1\ge T_2\ge \cdots$ be a sequence of positive contractions on $H$ and let $S_n:=T_{n}\cdots T_1$. 
    The following statements hold (WOT stands for the weak operator topology): 
    \begin{list}{}{}
    \item[{\rm{(1)}}] $\disp \lim_{n\to \infty}S_n^*=P$ (SOT). In particular, $\disp \lim_{n\to \infty}S_n=P$  (WOT) holds.
    \item[{\rm{(2)}}] Let $\xi\in H$. If the set $\{S_n\xi\mid\,n\in \mathbb{N}\}$ is totally bounded, then $\disp \lim_{n\to \infty}\|S_n\xi-P\xi\|=0$ holds. 
    \item[{\rm{(3)}}] For every $\xi\in H$ and every $k\in \mathbb{N}$, $\displaystyle \lim_{n\to \infty}\|S_{n+k}\xi-S_n\xi\|=0$ holds. 
    \end{list}
    \end{proposition}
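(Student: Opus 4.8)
The plan is to treat the three parts in the order (1), (3), (2), since part (1) carries essentially all of the difficulty and the other two follow from it by soft arguments.

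Before anything, I would record two preliminary facts. First, every vector in $P(H)$ is fixed by all the operators involved: if $\zeta\in P(H)$ then $T\zeta=\zeta$, and since $T\le T_j$ for every $j$, Lemma~\ref{lem contraction eigenvalue} gives $1_{\{1\}}(T)\le 1_{\{1\}}(T_j)$, hence $T_j\zeta=\zeta$ and therefore $S_n\zeta=S_n^*\zeta=\zeta$. Consequently $S_n^*P=P=S_nP$, and expanding $\|S_n^*\xi\|^2$ along the orthogonal splitting $\xi=P\xi+(1-P)\xi$ makes the cross term vanish, so that $\|S_n^*\xi\|^2=\|P\xi\|^2+\|S_n^*(1-P)\xi\|^2$. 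In particular $\|S_n^*\xi\|\ge\|P\xi\|$ for all $n$, which is the easy half of (1); the whole problem is the matching upper bound $\limsup_n\|S_n^*\xi\|\le\|P\xi\|$.

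For (1), the key idea---and the step I expect to be the main obstacle---is a tail-replacement estimate: for each fixed $k$,
\[
\lim_{n\to\infty}\bigl\|S_n^*\xi-S_{n-k}^*\,T^k\xi\bigr\|=0 .
\]
I would prove this by induction on $k$. The base case $k=1$ is immediate from $S_n^*=S_{n-1}^*T_n$ together with $\|S_{n-1}^*(T_n-T)\xi\|\le\|(T_n-T)\xi\|\to0$, using only $T_n\to T$ (SOT) and $\|S_{n-1}^*\|\le1$; the inductive step replaces the factor $T_{n-k}$ by $T$ acting on the fixed vector $T^k\xi$, again controlled by $\|(T_{n-k}-T)T^k\xi\|\to0$. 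Granting this, the contractivity bound $\|S_{n-k}^*T^k\xi\|\le\|T^k\xi\|$ forces $\limsup_n\|S_n^*\xi\|\le\|T^k\xi\|$ for every $k$; letting $k\to\infty$ and using $T^k\to P$ (SOT), i.e. $\|T^k\xi\|\to\|P\xi\|$, yields exactly the required upper bound. Combined with the lower bound above this gives $\|S_n^*\xi\|\to\|P\xi\|$, whence $\|S_n^*(1-P)\xi\|\to0$ and $S_n^*\xi=P\xi+S_n^*(1-P)\xi\to P\xi$ in norm; taking adjoints yields $S_n\to P$ (WOT). I emphasize that this detour is what makes the proof go through: the naive route of proving $\|S_n^*\xi\|\to\|P\xi\|$ by identifying $\lim_n S_n^*S_n$ or by bounding consecutive differences $\|S_{n+1}^*\xi-S_n^*\xi\|$ does not succeed, because the resulting limit operator is precisely the quantity that governs the (open) full conjecture.

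For (3), I would use the energy estimate underlying the whole subject. Since each $T_j$ is a contraction, $\|S_n\xi\|$ is non-increasing, hence convergent, and the telescoping identity $\|S_n\xi\|^2-\|S_{n+1}\xi\|^2=\langle(1-T_{n+1}^2)S_n\xi,S_n\xi\rangle$ together with the operator inequality $(1-T_{n+1})^2\le 1-T_{n+1}^2$ gives $\|S_{n+1}\xi-S_n\xi\|^2=\|(1-T_{n+1})S_n\xi\|^2\le\|S_n\xi\|^2-\|S_{n+1}\xi\|^2\to0$. This settles the case $k=1$, and the general case follows by writing $S_{n+k}\xi-S_n\xi$ as a telescoping sum of $k$ consecutive differences, each tending to $0$ as $n\to\infty$. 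Finally, (2) is a soft consequence of (1): by (1) we have $S_n\xi\rightharpoonup P\xi$ weakly, and if $\{S_n\xi\mid n\in\N\}$ is totally bounded then it is relatively norm-compact (the ambient space being complete), so every subsequence of $(S_n\xi)$ has a norm-convergent sub-subsequence whose limit must coincide with the weak limit $P\xi$; hence no subsequence stays bounded away from $P\xi$, and $\|S_n\xi-P\xi\|\to0$.
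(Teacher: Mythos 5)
Your proof is correct. Parts (1) and (3) are essentially the paper's own arguments in slightly different clothing: for (1) the paper writes $S_{n+k}^{*}P^{\perp}\xi=S_n^{*}(T_{n+1}\cdots T_{n+k}P^{\perp}\xi)$ and invokes joint SOT-continuity of multiplication on the unit ball to replace the last $k$ factors by $T^{k}$, which is exactly your tail-replacement lemma proved by induction (the paper applies it directly to $P^{\perp}\xi$, you apply it to $\xi$ and then use the orthogonal splitting $\|S_n^{*}\xi\|^{2}=\|P\xi\|^{2}+\|S_n^{*}P^{\perp}\xi\|^{2}$; the two bookkeeping choices are interchangeable). For (3) the paper tracks the scalars $a_n=\nai{S_{n+1}\xi}{S_n\xi}$ and $b_n=\|S_n\xi\|^{2}$ and shows both converge to the same limit, whereas you use the operator inequality $(1-T_{n+1})^{2}\le 1-T_{n+1}^{2}$ to dominate $\|S_{n+1}\xi-S_n\xi\|^{2}$ by the telescoping quantity $b_n-b_{n+1}$; these are equivalent computations. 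The genuine divergence is in (2): the paper reruns a quantitative estimate, choosing a finite $\varepsilon$-net $\{S_jP^{\perp}\xi\}_{j\le n_0}$ for the totally bounded set and bounding $\limsup_n\|S_nP^{\perp}\xi\|\le\varepsilon+\max_{j\le n_0}\|T^{k}S_jP^{\perp}\xi\|$ before letting $k\to\infty$, while you deduce (2) softly from (1): total boundedness plus completeness gives relative norm-compactness, and a norm-precompact sequence converging weakly to $P\xi$ must converge to $P\xi$ in norm by the subsequence principle. Your route is shorter and cleanly isolates (1) as the only hard ingredient; the paper's route has the mild virtue of being self-contained and quantitative (it exhibits the explicit $\varepsilon$-dependence), but both are valid.
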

    \begin{proof}(1) Let $\xi\in H$ and $k\in \mathbb{N}$. Then for each $n\in \mathbb{N}$, 
    \[\|S_{n+k}^*P^{\perp}\xi\|=\|T_1\cdots T_n(T_{n+1}\cdots T_{n+k}P^{\perp}\xi)\|\le \|T_{n+1}\cdots T_{n+k}P^{\perp}\xi\|.\]
    Since $T_{n+j}\stackrel{n\to \infty}{\to}T\ (1\le j\le k)$ (SOT) and since the operator multiplication is jointly SOT-continuous on the unit ball of $\mathbb{B}(H)$, 
    $T_{n+1}\cdots T_{n+k}\stackrel{n\to \infty}{\to}T^k$ (SOT) holds. Thus 
    \[\limsup_{n\to \infty}\|S_n^*P^{\perp}\xi\|=\limsup_{n\to \infty}\|S_{n+k}^*P^{\perp}\xi\|\le \|T^kP^{\perp}\xi\|.\]
    By letting $k\to \infty$, we obtain 
    \[\|T^{k}P^{\perp}\xi\|^2=\int_{[0,1)}t^{2k}d\|e_T(t)\xi\|^2\stackrel{k\to \infty}{\to}0\]
    by the Lebesgue dominated convergence theorem. Here, $e_T(\cdot)$ is the spectral resolution of $T$. This shows that $\lim_{n\to \infty}\|S_n^*P^{\perp}\xi\|=0$. On the other hand, $T_nP\xi=P\xi$, for every $n\in \mathbb{N}$, whence $\lim_{n\to \infty}S_n^*P\xi=P\xi$. Hence $\lim_{n\to \infty}\|S_n^*\xi-P\xi\|=0$.\\
    (2) Let $\varepsilon>0$ and $k\in \mathbb{N}$. Since $\{S_n\xi\mid\,n\in \mathbb{N}\}$ is totally bounded and since $S_nP\xi=P\xi\ (n\in \mathbb{N})$, the set $\{S_nP^{\perp}\xi\mid n\in \mathbb{N}\}$ is totally bounded. Thus, there exists $n_0\in \mathbb{N}$ such that for every $n\in \mathbb{N}$, there exists $k_n\in \{1,\dots, n_0\}$ such that $\|S_nP^{\perp}\xi-S_{k_n}P^{\perp}\xi\|<\varepsilon$ holds. Then 
    \eqa{
    \|S_{n+k}P^{\perp}\xi\|&=\|T_{n+k}\cdots T_{n+1}S_nP^{\perp}\xi\|\\
    &\le \|T_{n+k}\cdots T_{n+1}(S_nP^{\perp}\xi-S_{k_n}P^{\perp}\xi)\|+\|T_{n+k}\cdots T_{n+1}S_{k_n}P^{\perp}\xi\|\\
    &\le \|S_nP^{\perp}\xi-S_{k_n}P^{\perp}\xi\|+\|T_{n+k}\cdots T_{n+1}S_{k_n}P^{\perp}\xi\|.
    }
    By a similar reasoning as in (i), we get 
    \[\limsup_{n\to \infty}\|S_nP^{\perp}\xi\|=\limsup_{n\to \infty}\|S_{n+k}P^{\perp}\xi\|\le \varepsilon+\max_{1\le j\le n_0}\|T^kS_jP^{\perp}\xi\|.\]
    Since $S_jP^{\perp}\xi\in P^{\perp}(H)\ (1\le j\le n_0)$, it follows that $\lim_{k\to \infty}\max_{1\le j\le n_0}\|T^kS_jP^{\perp}\xi\|=0$. Thus $\limsup_{n\to \infty}\|S_nP^{\perp}\|\le \varepsilon$. Since $\varepsilon>0$ is arbitrary, it follows that $\lim_{n\to \infty}\|S_nP^{\perp}\xi\|=0$. Therefore $\lim_{n\to \infty}\|S_n\xi-P\xi\|=0$ holds.\\
    (3) It suffices to show that $\lim_{n\to \infty}\|S_{n+1}\xi-S_n\xi\|=0$. Define $a_n=\nai{S_{n+1}\xi}{S_{n}\xi}$ and $b_n=\nai{S_n\xi}{S_n\xi}\ (n\in \mathbb{N})$. Since $T_{n+1}$ is a positive contraction, we have $0\le a_n\le b_n$. 
    Since $T_n$ is a contraction, $(b_n)_{n=1}^{\infty}$ is positive and non-increasing. Therefore, the limit $\beta=\lim_{n\to \infty}b_n$ exists. On the other hand, for every $n\in \mathbb{N}$, 
    \eqa{
    b_{n+1}&=\nai{S_{n+1}\xi}{S_{n+1}\xi}=\nai{T_{n+1}^2S_n\xi}{S_n\xi}\\
    &\le \nai{T_{n+1}S_n\xi}{S_n\xi}=a_n\le \nai{S_n\xi}{S_n\xi}=b_n.
    }
    This implies that $\lim_{n\to \infty}a_n=\beta$ holds. Then 
    \eqa{
    \|S_{n+1}\xi-S_n\xi\|^2&=\|S_{n+1}\xi\|^2+\|S_n\xi\|^2-2{\rm{Re}}\nai{S_{n+1}\xi}{S_n\xi}\\
    &\stackrel{n\to \infty}{\to}\beta+\beta-2\beta=0.
    }
    This shows the claim. 
    \end{proof}

\begin{remark}
Let $(\xi_n)_{n=1}^{\infty}$ be a sequence in $H$ with the following properties. 
\begin{list}{}{}
\item[{\rm{(i)}}] $\lim_{n\to \infty}\xi_n=0$ weakly in $H$. 
\item[{\rm{(ii)}}] $(\|\xi_n\|)_{n=1}^{\infty}$ is non-increasing (hence it is convergent).
\item[{\rm{(iii)}}] For every $k\in \mathbb{N}$, $\lim_{n\to \infty}\|\xi_{n+k}-\xi_n\|=0$ holds.  
\end{list}
If it follows that $\{\xi_n\mid n\in \mathbb{N}\}$ is totally bounded,  then $\disp \lim_{n\to \infty}S_n=P$ (SOT) by Proposition \ref{prop: S_n*converges} (ii) (put $\xi_n=S_nP^{\perp}\xi$). We remark, however, that the set $\{\xi_n\mid n\in \mathbb{N}\}$ satisfying (i), (ii) and (iii) need not be totally bounded in general. 
\end{remark}

\begin{nonexample}
Let $\theta_n=\tfrac{\pi}{2n}\ (n\in \mathbb{N})$. 
Fix a CONS $(e_n)_{n=1}^{\infty}$ for $H$. We will construct a sequence $\mathcal{S}=\{\eta_{n,j}\mid n\in \mathbb{N},\,1\le j\le n\}$ of unit vectors in $H$ with the following properties: 
\begin{list}{}{}
\item[{\rm{(a)}}] $\|\eta_{n,j+1}-\eta_{n,j}\|=\|\eta_{n+1,1}-\eta_{n,n}\|=2\sin \frac{\theta_n}{2} (n\ge 2,\,1\le j\le n-1)$. 
\item[{\rm{(b)}}] $\eta_{n,1}=e_n\ (n\in \mathbb{N})$. In particular, $\{e_n\mid n\in \mathbb{N}\}\subset \mathcal{S}$ holds. 
\item[{\rm{(c)}}] $\eta_{n,j}\in {\rm{span}}\{e_n,e_{n+1}\}\ (n\in \mathbb{N},\,1\le j\le s_n)$. 
\end{list}
Define a linear ordering $<$ on $I=\{(n,j)\mid n\in \mathbb{N},\,1\le j\le n\}$ by $(n,j)<(n',j')$ if $n<n'$ or $n=n'$ and $j<j'$. Fix the order-preserving bijection $\nai{\,\cdot\,}{\,\cdot\,}\colon I\to \mathbb{N}$ given by
\[\nai{n}{j}=j+\frac{(n-1)n}{2},\ \ 1\le j\le n.\]
We set
\[\eta_{n,j}:=\cos ((j-1)\theta_n)e_n+\sin ((j-1)\theta_n)e_n,\ \ n\in \mathbb{N},\,1\le j\le n\]
By construction, (b) and (c) hold. 
Both the angle between $\eta_{n,j+1}$ and $\eta_{n,j}$ and the angle between $\eta_{n+1,1}$ and $\eta_{n,n}$ are $\theta_n$ if $n\ge 2$ and $1\le j\le n$. Therefore (a) holds.

Then the sequence $(\xi_n)_{n=1}^{\infty}$ given by $\xi_{\nai{n}{j}}=\eta_{n,j}\ ((n,j)\in I)$ does the job. Indeed, by (a), (iii) holds. It is obvious that (ii) holds. (i) holds because of (c). However, by (b), $\{\xi_n\}_{n=1}^{\infty}$ is not relatively compact, hence it is not totally bounded.  Note that the sequence $(\xi_n)_{n=1}^{\infty}$ we constructed is of the form 
\[\xi_n=U_nU_{n-1}\cdots U_1e_1,\]
where each $U_n$ is a unitary such that ${\rm{rank}}(U_n-1)=2$ for every $n\in \mathbb{N}$.
\end{nonexample} 
    \begin{proof}[Proof of Theorem~\ref{thm main}~(1) and (2-ii)]
        It is clear that Conjecture~\ref{conj strong Paszkiewicz}$\implies$ Conjecture~\ref{conj Paszkiewicz} holds. Conversely, assume that Conjecture~\ref{conj Paszkiewicz} holds and $\disp S=\lim_{n\to \infty}S_n$ (SOT). 
        Then for each $\xi\in H$, $(S_n\xi)_{n=1}^{\infty}$ converges, whence it is totally bounded. Therefore by Proposition~\ref{prop: S_n*converges} (i) and (ii), $\disp \lim_{n\to \infty}S_n=P$ (S$^*$OT). Therefore Conjecture \ref{conj strong Paszkiewicz} holds. This proves (1).\\ 
        (2-ii) follows from Proposition \ref{prop: S_n*converges}\,(i) and the fact that the map $V\mapsto V^*$ is strongly continuous on the unit ball of a finite von Neumann algebra. 
     \end{proof}
\subsection{Proof of Theorem \ref{thm main}~(2)}
Here we prove Theorem \ref{thm main}~(2). 
\begin{definition}
    Let $T_1\ge T_2\ge \dots$ be a decreasing sequence of positive contractions on a Hilbert space $H$. We say that it has uniform spectral gap at 1, if there exist $\delta\in (0,1)$ and $N\in \mathbb{N}$ such that $\sigma(T_n)\cap (1-\delta,1)=\emptyset$ for all $n\ge N$.
\end{definition}
Theorem \ref{thm main}~(2) follows from the next Proposition. 
\begin{proposition}\label{prop: spectral gap}
Let $T_1\ge T_2\ge \dots$ be a decreasing sequence of positive contractions on a Hilbert space $H$. 
\begin{list}{}{} 
    \item[{\rm (1)}] If $T_1\ge T_2\ge \cdots$ has uniform spectral gap at 1.  Then the Conjecture \ref{conj Paszkiewicz} holds for $T_1\ge T_2\ge \cdots$. 
    \item[{\rm (2)}] If 1 is not in the essential spectrum of $T_n$ for some $n\in \N$, then $T_1\ge T_2\ge \cdots$ has uniform spectral gap at 1.
\end{list}
\end{proposition}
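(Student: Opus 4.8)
The plan is to prove the two parts in the logical order (2) then (1): part~(2) converts the essential-spectrum hypothesis into the uniform spectral gap, and part~(1) then deduces convergence directly from the gap. For part~(2), suppose $1\notin\sigma_{\mathrm{ess}}(T_n)$ for some $n$. Since the essential spectrum is closed and $T_n$ is a positive contraction, there is $\varepsilon\in(0,1)$ for which the spectral projection $E:=1_{(1-\varepsilon,1]}(T_n)$ has finite rank, say $r$. First I would record the operator domination $T_n\le (1-\varepsilon)\mathbf 1+\varepsilon E$, which follows by splitting $T_n=T_nE+T_n(1-E)$ and using $T_nE\le E$ together with $T_n(1-E)\le(1-\varepsilon)(1-E)$. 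For every $m\ge n$ we then have $T_m\le T_n\le(1-\varepsilon)\mathbf 1+\varepsilon E$, whose right-hand side has $1$ as an eigenvalue of multiplicity $r$ and $1-\varepsilon$ of infinite multiplicity. The main tool is the Weyl monotonicity (min--max) principle: writing $\lambda_k(A)$ for the $k$-th largest point of the spectrum of a self-adjoint $A$ counted by multiplicity down to the top of its essential spectrum, $0\le A\le B$ forces $\lambda_k(A)\le\lambda_k(B)$. This gives $\lambda_{r+1}(T_m)\le 1-\varepsilon$, so each $T_m$ $(m\ge n)$ has at most $r$ spectral points in $(1-\varepsilon,1]$, all genuine eigenvalues; in particular $1\notin\sigma_{\mathrm{ess}}(T_m)$ for all $m\ge n$.

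Next I would track the top $r$ eigenvalues $\mu_k^{(m)}:=\lambda_k(T_m)$ $(1\le k\le r)$ as $m\to\infty$. Since $T_{m+1}\le T_m$, the same principle shows each $m\mapsto\mu_k^{(m)}$ is non-increasing, hence convergent to some $\mu_k^{(\infty)}\in[0,1]$. By Lemma~\ref{lem contraction eigenvalue} the projections $P_m=1_{\{1\}}(T_m)$ decrease and have rank $\le r$, so their ranks are eventually constant, equal to some $s$; a decreasing sequence of projections of eventually constant finite rank is eventually constant, and by Corollary~\ref{cor Pn conv to P} its value is $P$, so $P_m=P$ and $\dim P(H)=s$ for all large $m$. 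Thus for large $m$ exactly $s$ of the $\mu_k^{(m)}$ equal $1$ and $\mu_{s+1}^{(m)}<1$. The decisive point is that $\mu_{s+1}^{(\infty)}<1$: if instead $\mu_{s+1}^{(\infty)}=1$, then a non-increasing sequence bounded above by $1$ and converging to $1$ must equal $1$ identically, forcing $\dim P_m(H)\ge s+1$ for every $m$ and contradicting the eventual value $s$. Setting $\delta:=\tfrac12\min\{\varepsilon,\,1-\mu_{s+1}^{(\infty)}\}$, for all large $m$ every spectral point of $T_m$ in $(1-\varepsilon,1)$ is $\le\mu_{s+1}^{(m)}\le 1-\delta$, while the rest of $\sigma(T_m)$ lies in $[0,1-\varepsilon]$; hence $\sigma(T_m)\cap(1-\delta,1)=\emptyset$, which is the uniform spectral gap.

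For part~(1), assume the gap holds for $n\ge N$. Since deleting the fixed initial factors $T_{N-1}\cdots T_1$ and pre-composing the limit does not affect SOT-convergence (and $PS_{N-1}=P$ because $P\le P_j$ for all $j$), I may assume $\sigma(T_n)\cap(1-\delta,1)=\emptyset$ for all $n$. Write $T_m=P_m+R_m$ with $R_m=T_m(1-P_m)$; the gap gives $\|R_m\|\le 1-\delta$, and $P_m,R_m$ act on orthogonal subspaces. The key identity is $P_mS_{m-1}=P_m$: since $P_m\le P_j$ for $j\le m-1$, the range $P_m(H)$ lies in the fixed-point space $\ker(T_j-1)$ of each earlier factor, so $T_jP_m=P_m$ and hence $P_mS_{m-1}=P_m$. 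Consequently
\[
S_m\xi=T_mS_{m-1}\xi=P_mS_{m-1}\xi+R_mS_{m-1}\xi=P_m\xi+R_mS_{m-1}\xi,
\]
where $P_m\xi\to P\xi$ by Corollary~\ref{cor Pn conv to P}. To control the remaining term I would run a telescoping energy estimate: decomposing $S_{m-1}\xi$ along $P_m$ and using $\|R_m\|\le 1-\delta$ on $(1-P_m)H$ yields
\[
\|S_{m-1}\xi\|^2-\|S_m\xi\|^2\ \ge\ \bigl(1-(1-\delta)^2\bigr)\,\|(1-P_m)S_{m-1}\xi\|^2 .
\]
Summing over $m$ telescopes the left side to at most $\|\xi\|^2$, so $\sum_m\|(1-P_m)S_{m-1}\xi\|^2<\infty$; in particular $\|(1-P_m)S_{m-1}\xi\|\to 0$, and since $\|R_mS_{m-1}\xi\|\le(1-\delta)\|(1-P_m)S_{m-1}\xi\|$ we obtain $R_mS_{m-1}\xi\to 0$. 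The displayed decomposition then gives $S_m\xi\to P\xi$ for every $\xi$, which is Conjecture~\ref{conj strong Paszkiewicz} (hence Conjecture~\ref{conj Paszkiewicz}) for this sequence.

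The main obstacle is in part~(2): passing from the per-operator, non-quantitative condition $1\notin\sigma_{\mathrm{ess}}(T_n)$ to a single gap width $\delta$ valid for all large $m$. The danger is that the non-unit eigenvalues of $T_m$ could approach $1$ as $m\to\infty$; ruling this out is exactly where the monotonicity of the eigenvalues, combined with the eventual constancy of the multiplicity of the eigenvalue $1$ furnished by Corollary~\ref{cor Pn conv to P}, does the work. Part~(1), by contrast, is short once the identity $P_mS_{m-1}=P_m$ is noticed; there the gap enters only to make the contraction on $(1-P_m)H$ strict, which is what powers the summability.
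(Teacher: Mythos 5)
Your proof is correct, but both halves take genuinely different routes from the paper. For part (1), the paper approximates $\xi\in P^{\perp}(H)$ by $P_{n_0}^{\perp}\xi$ for a large $n_0\ge N$ and then exploits the fact that the ranges $P_m^{\perp}(H)$ are nested and invariant under the later factors, so that the tail of the product contracts the approximant by $(1-\delta)^j$; your argument instead uses the exact identity $P_mS_{m-1}=P_m$ together with the telescoping estimate $\|S_{m-1}\xi\|^2-\|S_m\xi\|^2\ge\bigl(1-(1-\delta)^2\bigr)\|(1-P_m)S_{m-1}\xi\|^2$, whose summability kills the off-$P_m$ component with no $\varepsilon$-bookkeeping --- arguably cleaner, and it delivers the limit $P\xi$ in one stroke. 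For part (2), the paper stays elementary: it proves Lemma~\ref{lem: gap and rank} by hand and runs an induction producing times $n_1<n_2<\cdots$ at which ${\rm rank}(P_{n_k})$ strictly decreases, so the process terminates after at most $d={\rm rank}(P_{n_0})$ steps; you instead dominate $T_m\le(1-\varepsilon)1+\varepsilon E$ and invoke min--max/Weyl monotonicity of the eigenvalues above the essential spectrum, concluding from the eventual constancy of ${\rm rank}(P_m)$ (via Corollary~\ref{cor Pn conv to P}) that the monotone limit $\mu_{s+1}^{(\infty)}$ is strictly below $1$. Your route buys a more quantitative picture (the entire string of top eigenvalues decreases monotonically in $m$), at the price of importing the min--max principle for the discrete spectrum, a standard but heavier tool the paper does not use; the paper's rank induction reaches the same conclusion with only Lemma~\ref{lem contraction eigenvalue} and Weyl's criterion. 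One small point worth making explicit in your part (2): the finiteness of ${\rm rank}\,1_{(1-\varepsilon,1]}(T_n)$ requires choosing $\varepsilon$ with $1-\varepsilon>\sup\sigma_{\rm e}(T_n)$, so that $\sigma(T_n)\cap[1-\varepsilon,1]$ is a compact set of isolated eigenvalues of finite multiplicity and hence finite.
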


\begin{proof}[Proof of Proposition \ref{prop: spectral gap}~(1)]
Fix $\delta$ and $N$ witnessing the uniform spectral gap at 1 of $T_1\ge T_2\ge \dots$. 
Let $\xi\in H$. Assume first that $\xi \in P(H)$. Then for all $n\in \mathbb{N}$, $T_n\xi=\xi$, so that $S_n\xi=\xi$. Thus $\lim_{n\to \infty}S_n\xi=\xi=P\xi$. Next, assume that $\xi\in P^{\perp}(H)$ and $\varepsilon>0$. Choose $n_0\in \mathbb{N}$ such that $n_0\ge N$ and $\|\xi-P_{n_0}^{\perp}\xi\|<\varepsilon$.   Let $\eta:=T_{n_0}\cdots T_1\xi$ and $\eta':=T_{n_0}\cdots T_1P_{n_0}^{\perp}\xi$. Then because all $T_i'$s are contractions, we have $\|\eta-\eta'\|\le \|\xi-P_{n_0}^{\perp}\xi\|<\varepsilon$.\\
Note also that $\eta'\in P_{n_0}^{\perp}(H)$, because all $T_1,\dots, T_{n_0}$ leave the range of $P_{n_0}$ invariant, hence the range of $P_{n_0}^{\perp}$ invariant.
Since $P_{n_0}^{\perp}\le P_{n_0+1}^{\perp}$, we have $\eta'\in P_{n_0+1}^{\perp}(H)$. 
Thus $T_{n_0+1}\eta'\in P_{n_0+1}^{\perp}(H)\subset P_{n_0+2}^{\perp}(H)$, so that $T_{n_0+2}T_{n_0+1}\eta'\in P_{n_0+3}^{\perp}(H)$. By induction, we obtain 
\[\eta'_j:=T_{n_0+j}T_{n_0+j-1}\cdots T_{n_0+1}\eta'\in P_{n_0+j+1}^{\perp}(H)=1_{[0,1-\delta]}(T_{n_0+j+1})(H),\ \ \ j\in \mathbb{N}.\]
Therefore 
\eqa{
\|T_{n_0+j}(\underbrace{T_{n_0+j-1}\cdots T_{n_0+1}\eta'}_{=\eta'_{j-1}\in P_{n_0+j}^{\perp}(H)})\|&\le \|T_{n_0+j}|_{P_{n_0+j}^{\perp}(H)}\|\,\|T_{n_0+j-1}\eta_{j-2}'\|\\
&\le \cdots \le \prod_{k=1}^j\|T_{n_0+k}|_{P_{n_0+k}^{\perp}(H)}\|\,\|\eta'\|\\
&\le (1-\delta)^j\|\eta'\|.
}
This shows that 
\eqa{
\|S_{n_0+j}\xi\|&=\|T_{n_0+j}\cdots T_{n_0+1}\eta\|\\
&\le \|T_{n_0+j}\cdots T_{n_0+1}(\eta-\eta')\|+(1-\delta)^j\|\eta'\|\\
&<\varepsilon+(1-\delta)^j\|\eta'\|.
}
Thus $\limsup_{j\to \infty} \|S_{n_0+j}\xi\|\le \varepsilon$. Since $\varepsilon>0$ is arbitrary, we get $\displaystyle \lim_{n\to \infty}\|S_n\xi\|=0$. 
Therefore for general $\xi\in H$, we have 
\[\|S_n\xi-P\xi\|\le \|S_n(P\xi)-P\xi\|+\|S_nP^{\perp}\xi\|\xrightarrow{n\to \infty}0.\]
This shows that $\disp \lim_{n\to \infty}S_n=P$ (SOT). 
\end{proof}
For the proof of Proposition \ref{prop: spectral gap}~(2), we need the following lemma. We denote by $\sigma_{\rm e}(T)$ the essential spectrum of an operator $T$. 
\begin{lemma}\label{lem: gap and rank}
    Let $T,T'$ be positive contractions on a Hilbert space $H$ such that $T\ge T'$. 
    \begin{list}{}{}
        \item[{\rm (1)}] If $\delta\in (0,1)$ satisfies $\sigma(T)\cap (1-\delta,1)=\emptyset$ and $\sigma(T')\cap (1-\delta,1)\neq \emptyset$, then $P'\lneq P$ holds, where $P:=1_{\{1\}}(T)$ and $P':=1_{\{1\}}(T')$.
        \item[{\rm (2)}] If $1\in \sigma_{\rm e}(T')$, then $1\in \sigma_{\rm e}(T)$ holds.   
    \end{list}
\end{lemma}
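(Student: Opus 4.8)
The plan is to prove the two parts separately, in both cases exploiting the order relation $T\ge T'$ by comparing the quadratic forms $\nai{T\xi}{\xi}$ and $\nai{T'\xi}{\xi}$ on carefully chosen subspaces, together with spectral-projection estimates.

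For part (1), recall that $P'\le P$ is already guaranteed by the last assertion of Lemma~\ref{lem contraction eigenvalue}, so it suffices to rule out $P'=P$. First I would record the consequence of the gap hypothesis on $T$: since the spectral measure of $T$ charges nothing in $(1-\delta,1)$, we have $1_{(1-\delta,1]}(T)=P$ and hence the operator inequality $T\le (1-\delta)(1-P)+P$. Next, using $\lambda\in \sigma(T')\cap(1-\delta,1)$, I would pick $\varepsilon>0$ small enough that $1-\delta<\lambda-\varepsilon$ and $\lambda+\varepsilon<1$, and choose a unit vector $\xi$ in the range of the nonzero projection $1_{(\lambda-\varepsilon,\lambda+\varepsilon)}(T')$. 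Because this interval omits $1$, we have $P'\xi=0$, while $\nai{T'\xi}{\xi}\ge \lambda-\varepsilon>1-\delta$. Now if $P'=P$ held, then $P\xi=0$, so the displayed inequality for $T$ would give $\nai{T\xi}{\xi}\le 1-\delta$; but $T\ge T'$ forces $\nai{T\xi}{\xi}\ge \nai{T'\xi}{\xi}>1-\delta$, a contradiction. Hence $P'\ne P$, and $P'\lneq P$.

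For part (2), I would use the standard characterization that, for a positive contraction $S$, one has $1\in \sigma_{\rm e}(S)$ if and only if $\mathrm{rank}\,1_{(1-\varepsilon,1]}(S)=\infty$ for every $\varepsilon>0$. Assuming $1\in \sigma_{\rm e}(T')$, fix an arbitrary $\varepsilon'>0$ and set $\varepsilon=\varepsilon'/2$. The subspace $V=1_{(1-\varepsilon,1]}(T')(H)$ is infinite-dimensional, and every $\xi\in V$ satisfies $\nai{T\xi}{\xi}\ge \nai{T'\xi}{\xi}\ge (1-\varepsilon)\|\xi\|^2$. The goal is to deduce $\mathrm{rank}\,1_{(1-\varepsilon',1]}(T)=\infty$. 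Suppose instead this rank were finite, say $R=1_{(1-\varepsilon',1]}(T)$ has finite rank; then $T(1-R)\le (1-\varepsilon')(1-R)$. Since $\dim V=\infty$ exceeds $\mathrm{rank}\,R$, the intersection $V\cap \ker R$ is nonzero, so there is a nonzero $\eta\in V\cap (1-R)(H)$, for which simultaneously $\nai{T\eta}{\eta}\ge (1-\varepsilon)\|\eta\|^2$ and $\nai{T\eta}{\eta}\le (1-\varepsilon')\|\eta\|^2$; this forces $\varepsilon'\le \varepsilon$, contradicting $\varepsilon'=2\varepsilon$. As $\varepsilon'>0$ was arbitrary, $1\in \sigma_{\rm e}(T)$.

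The routine parts are the spectral-calculus estimates, which I would not belabor. The step requiring the most care is the dimension count in part (2): because $T$ may have continuous spectrum accumulating at $1$, one cannot simply count eigenvalues, and the clean way to proceed is precisely the min-max-type argument above, phrased through ranks of the spectral projections $1_{(1-\varepsilon,1]}$ and the observation that an infinite-dimensional subspace must meet the kernel of any finite-rank projection nontrivially. Verifying that this intersection is nonzero (indeed infinite-dimensional, since $\ker R$ has finite codimension) is the only genuinely nontrivial point, and it is what allows the order relation $T\ge T'$ to transfer the failure of a spectral gap from $T'$ to $T$.
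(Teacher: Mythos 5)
Your argument for part (1) is essentially the paper's: both reduce to ruling out $P'=P$ by taking a vector $\xi$ in the range of $1_{(\lambda-\varepsilon,\lambda+\varepsilon)}(T')$ with $1-\delta<\lambda-\varepsilon$ and $\lambda+\varepsilon<1$, noting $P\xi=P'\xi=0$, and squeezing $\nai{T\xi}{\xi}$ between $\lambda-\varepsilon$ and $1-\delta$. That part is correct and needs no further comment.

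For part (2) your proof is also correct, but it takes a genuinely different route. The paper uses Weyl's criterion directly: it takes an orthonormal sequence $(\xi_n)$ with $\|T'\xi_n-\xi_n\|\to 0$, observes $\nai{T'\xi_n}{\xi_n}\to 1$, and then runs the one-line estimate $\|T\xi_n-\xi_n\|^2\le 2-2\nai{T\xi_n}{\xi_n}\le 2-2\nai{T'\xi_n}{\xi_n}\to 0$, so the same sequence is a singular sequence for $T$ at $1$. You instead invoke the equivalent characterization $1\in\sigma_{\rm e}(S)\iff \operatorname{rank}1_{(1-\varepsilon,1]}(S)=\infty$ for all $\varepsilon>0$, and transfer infinite rank from $T'$ to $T$ by a dimension count: an infinite-dimensional subspace on which the quadratic form of $T'$ (hence of $T$) is $\ge 1-\varepsilon$ must meet the finite-codimensional kernel of $1_{(1-\varepsilon',1]}(T)$, producing a vector with contradictory form estimates. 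All steps check out, including the choice $\varepsilon=\varepsilon'/2$ that avoids any issue with non-strict inequalities and the observation that $V\cap\ker R\ne\{0\}$ because $R|_V$ cannot be injective on an infinite-dimensional $V$. What each approach buys: the paper's Weyl-sequence computation is shorter and exploits the contraction hypothesis in a single clean inequality, while your rank-comparison argument is more quantitative (it shows directly that spectral projections near $1$ for $T$ dominate in rank those for $T'$, which is in the spirit of the min-max principle) and avoids having to manufacture an orthonormal singular sequence. Either proof is acceptable.
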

\begin{proof}
    (1) By $0\le T'\le T\le 1$ and Lemma \ref{lem contraction eigenvalue}, we know that $P'\le P$ holds. Assume by contradiction that $P'=P$ holds. Let $t\in \sigma(T')\cap (1-\delta,1)$ and $\varepsilon>0$ be such that $1-\delta<t-\varepsilon$ and $t+\varepsilon<1$. Then there exists a nonzero vector $\xi\in 1_{(t-\varepsilon,t+\varepsilon)}(T')(H)$.  Since $P'=1_{\{1\}}(T')$ and $1_{(t-\varepsilon,t+\varepsilon)}(T')$ are orthogonal, we have $P'\xi=P\xi=0$.  This implies that $\xi=P^{\perp}\xi=1_{[0,1-\delta]}(T)\xi$. Then by $T'\le T$, we obtain 
    \begin{equation}
    (t-\varepsilon)\|\xi\|^2\le \nai{T'\xi}{\xi}\le \nai{T\xi}{\xi}=\nai{T1_{[0,1-\delta]}(T)\xi}{\xi}\le (1-\delta)\|\xi\|^2,
    \end{equation}
    which contradicts the condition $t-\varepsilon>1-\delta$. Therefore $P'\lneq P$ holds.\\
    (2) By $1\in \sigma_{\rm e}(T')$, there exists an orthonormal sequence $(\xi_n)_{n=1}^{\infty}$ in $H$ such that $\disp \lim_{n\to \infty}\|T'\xi_n-\xi_n\|=0$. Thus $\disp \lim_{n\to \infty}\|T'\xi_n\|=1=\lim_{n\to \infty}\nai{T'\xi_n}{\xi_n}$ holds.  
    Then by $0\le T'\le T\le 1$, we have 
    \eqa{
        \|T\xi_n-\xi_n\|^2&=\|T\xi_n\|^2-2\nai{T\xi_n}{\xi_n}+\|\xi_n\|^2\\
        &\le 2-2\nai{T\xi_n}{\xi_n}\\
        &\le 2-2\nai{T'\xi_n}{\xi_n}\xrightarrow{n\to \infty}0.
    }
    Thus, by Weyl's criterion for the essential spectrum (see e.g., \cite[Proposition 8.11]{MR2953553Schmudgenbook}), $1\in \sigma_{\rm e}(T)$ holds. 
\end{proof}

\begin{proof}[Proof of Proposition \ref{prop: spectral gap}~(2)] 
Assume that $1\notin \sigma_{\rm e}(T_{n_0})$. 
Let $P_{n_0}=1_{\{1\}}(T_{n_0})$. By $1\notin \sigma_{\rm e}(T_{n_0})$, $1$ is not an accumulation point of the spectrum $\sigma(T_{n_0})$ of $T_{n_0}$, and it is not an eigenvalue of $T_{n_0}$ of infinite multiplicity either.  
Thus there exists $\delta_0\in (0,1)$ such that 
$\sigma(T_{n_0})\cap (1-\delta_0,1)=\emptyset$, and $d={\rm rank}(P_{n_0})$ is finite (possibly $d=0$). If there is no $n>n_0$ such that $\sigma(T_n)\cap (1-\delta_0,1)\neq \emptyset$, then $\delta=\delta_0$ and $N=n_0$ work. If there is such an $n>n_0$, let $n_1$ be the smallest such  number. Then by $\sigma(T_{n_1})\cap (1-\delta_0,1)\neq \emptyset, T_{n_1}\le T_1$ and Lemma~\ref{lem: gap and rank}~(1), we have ${\rm{rank}}(P_{n_1})<{\rm{rank}}(P_{n_0})=d<\infty$ (thus $d\neq 0$ if such $n_1$ exists). By Lemma \ref{lem: gap and rank}~(2), $1\notin \sigma_{\rm e}(T_n)$ for every $n\ge n_0$. Thus, by the above argument, we may find $0<\delta_1<\delta$ such that $\sigma(T_{n_1})\cap (1-\delta_1,1)=\emptyset$. If there is no $n>n_1$ such that $\sigma(T_n)\cap (1-\delta_1,1)\neq \emptyset$, we set $N=n_1$ and $\delta=\delta_1$. If there is such an $n>n_1$, let $n_2$ be the smallest such number, and find $0<\delta_2<\delta_1$ such that $\sigma(T_{n_2})\cap (1-\delta_2,1)=\emptyset$. Then ${\rm{rank}}(P_{n_2})<{\rm{rank}}(P_{n_1})$. Inductively,
 we find a sequence $n_1<n_2<\cdots$ and $\delta_1>\delta_2>\cdots$. These sequences must have the same length at most $d$. Let $k$ be the length of these sequences. Then $N=n_k$ and $\delta=\delta_k$ work.   
\end{proof}

As a corollary, we obtain the following result, which was in fact the earliest and motivational result in this project, shown to us by Yasumichi Matsuzawa. The author would like to thank him for sharing his proof.  

\begin{corollary}[Matsuzawa]\label{cor compact case}
Conjecture \ref{conj Paszkiewicz} holds if $T_n$ is compact for some $n\in N$. 
\end{corollary}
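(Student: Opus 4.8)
The plan is to reduce the statement entirely to the already-proved Proposition \ref{prop: spectral gap}. The only thing that genuinely needs checking is that the hypothesis ``$T_{n_0}$ is compact for some $n_0$'' forces $1\notin \sigma_{\rm e}(T_{n_0})$. Once this is established, Proposition \ref{prop: spectral gap}~(2) immediately yields that $T_1\ge T_2\ge \cdots$ has uniform spectral gap at $1$, and then Proposition \ref{prop: spectral gap}~(1) gives the SOT-convergence of $S_n$, which is exactly Conjecture \ref{conj Paszkiewicz}.

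So suppose $T_{n_0}$ is compact, and I claim $1\notin \sigma_{\rm e}(T_{n_0})$. I would verify this directly from Weyl's criterion, in the same spirit as its use in the proof of Lemma \ref{lem: gap and rank}~(2). If one had $1\in \sigma_{\rm e}(T_{n_0})$, there would be an orthonormal sequence $(\xi_k)_{k=1}^{\infty}$ in $H$ with $\|T_{n_0}\xi_k-\xi_k\|\to 0$, hence $\|T_{n_0}\xi_k\|\to 1$. On the other hand, any orthonormal sequence converges weakly to $0$, and a compact operator carries weakly-null sequences to norm-null sequences, so $\|T_{n_0}\xi_k\|\to 0$; this contradiction rules $1$ out of $\sigma_{\rm e}(T_{n_0})$. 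Equivalently, one may invoke the spectral theorem for compact self-adjoint operators: the nonzero part of $\sigma(T_{n_0})$ consists of eigenvalues of finite multiplicity accumulating only at $0$, so $\sigma_{\rm e}(T_{n_0})=\{0\}$ since $H$ is infinite-dimensional, and in particular $1\notin \sigma_{\rm e}(T_{n_0})$.

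With $1\notin \sigma_{\rm e}(T_{n_0})$ in hand, the corollary follows by chaining the two parts of Proposition \ref{prop: spectral gap} as described. I expect no real obstacle here: the entire analytic content is carried by Proposition \ref{prop: spectral gap}, and compactness enters only through the elementary observation above. The single point deserving minor care is that $H$ is infinite-dimensional (as assumed throughout), which is what makes an orthonormal sequence available and keeps the weak-null/norm-null argument meaningful; in a finite-dimensional space $1$ could of course lie in $\sigma(T_{n_0})$, but there the conjecture is not at issue.
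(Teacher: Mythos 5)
Your proof is correct and follows exactly the route the paper intends: the corollary is stated immediately after Proposition \ref{prop: spectral gap} precisely because compactness of $T_{n_0}$ on the infinite-dimensional $H$ forces $\sigma_{\rm e}(T_{n_0})=\{0\}$, so $1\notin\sigma_{\rm e}(T_{n_0})$ and Theorem \ref{thm main}~(2-i) applies. The paper leaves this deduction implicit, and your verification of it (via Weyl's criterion or the spectral theorem for compact self-adjoint operators) is exactly the missing elementary step.
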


\section*{Acknowledgments}
The current work started as a joint work with Yasumichi Matsuzawa (Shinshu University) in 2018. However, he declined to be a coauthor and suggested that the current author write the paper alone.  

Nevertheless, we would like to emphasize that it was exactly his proof of Corollary \ref{cor compact case} which motivated us to generalize his proof, and the outcome is the main result of this paper. Therefore, his contribution to this work is essential. 
The author is supported by Japan Society for the Promotion of Sciences KAKENHI 20K03647. 
\bibliographystyle{siam}
\bibliography{references} 

\begin{thebibliography}{1}

\bibitem{MR0187116AmemiyaAndo}
{\sc I.~Amemiya and T.~And\^{o}}, {\em Convergence of random products of contractions in {H}ilbert space}, Acta Sci. Math. (Szeged), 26 (1965), pp.~239--244.

\bibitem{MR3268722KopeckaMuller14}
{\sc E.~Kopeck\'{a} and V.~M\"{u}ller}, {\em A product of three projections}, Studia Math., 223 (2014), pp.~175--186.

\bibitem{MR3642022KopeckaPaszkiewicz2017}
{\sc E.~Kopeck\'{a} and A.~Paszkiewicz}, {\em Strange products of projections}, Israel J. Math., 219 (2017), pp.~271--286.

\bibitem{Paszkiewiczpreprint2012}
{\sc A.~Paszkiewicz}, {\em The {A}memiya--{A}ndo conjecture falls}, arXiv:1203.3354,  (2012).

\bibitem{MR2953553Schmudgenbook}
{\sc K.~Schm\"{u}dgen}, {\em Unbounded self-adjoint operators on {H}ilbert space}, vol.~265 of Graduate Texts in Mathematics, Springer, Dordrecht, 2012.

\end{thebibliography}
\end{document}